
\documentclass[draft]{amsart}

\usepackage{amsthm} 
\usepackage{amssymb} 
\usepackage{enumerate} 

\usepackage{hyperref}


\numberwithin{equation}{section}

\theoremstyle{plain}
\newtheorem{lemma}{Lemma}[section]
\newtheorem{theorem}[lemma]{Theorem}
\newtheorem{proposition}[lemma]{Proposition}

\newtheorem{question}[lemma]{Question}

\theoremstyle{definition}

\theoremstyle{remark}
\newtheorem{remark}[lemma]{Remark} 


\hyphenation{mo-dule mo-dul-es com-plex com-plex-es mor-phism
  ho-mo-mor-phism iso-mor-phism pro-jec-tive in-jec-tive re-so-lu-tion
  ho-mo-lo-gy ho-mo-lo-gi-cal ho-mo-lo-gi-cally du-a-liz-ing
  re-si-due Grothen-dieck com-mu-ta-tive}

\renewcommand{\dim}{\operatorname{dim}}
\newcommand{\depth}{\operatorname{depth}}

\newcommand{\hh}{\operatorname{H}}

\newcommand{\flatdim}{\operatorname{flat\,dim}}
\renewcommand{\le}{\leqslant}
\renewcommand{\ge}{\geqslant}

\newcommand{\kos}[2]{\operatorname{K}[#1;#2]}
\newcommand{\length}{\ell}
\newcommand{\lol}{\ell\ell}
\newcommand{\Spec}{\operatorname{Spec}}
\newcommand{\Tor}{\operatorname{Tor}}

\newcommand{\bsy}{\boldsymbol{y}}

\newcommand{\vf}{\varphi}

\newcommand{\lotimes}{\otimes^{\mathbf L}}

\newcommand{\xra}{\xrightarrow}
\newcommand{\lra}{\longrightarrow}


\newcommand{\sfD}{\mathsf D}

\newcommand{\bsx}{\boldsymbol{x}}

\newcommand{\fm}{\mathfrak{m}} 
\newcommand{\fp}{\mathfrak{p}}
\newcommand{\fn}{\mathfrak{n}}

\begin{document}

\title[Modules of finite flat dimension and the Frobenius endomorphism]{Detecting finite flat dimension of modules via \\ iterates of the Frobenius endomorphism}

\author[D.\,J.\, Dailey]{Douglas J. Dailey}

\address{D.J.D. University of Dallas, Irving, Texas 75062, U.S.A.}

\email{ddailey@udallas.edu}

\urladdr{http://www.udallas.edu}

\author[S.\,B.\ Iyengar]{Srikanth B. Iyengar}

\address{S.B.I. University of Utah, Salt Lake City, UT 84112, U.S.A.}

\email{iyengar@math.utah.edu}

\urladdr{http://www.math.utah.edu/~iyengar}

\author[T.\ Marley]{Thomas Marley}

\address{T.M. University of Nebraska-Lincoln, Lincoln, NE 68588, U.S.A.}
\email{tmarley1@unl.edu}

\urladdr{http://www.math.unl.edu/~tmarley}

\thanks{S.B.I.\ was partly supported by NSF grant DMS-1503044.}

\date{\today}

\bibliographystyle{amsplain}

\keywords{Frobenius map, flat dimension, homotopical Loewy length}

\subjclass[2010]{13D05; 13D07, 13A35}

\begin{abstract} 
It is proved that a module $M$ over a Noetherian ring $R$ of positive characteristic $p$ has finite flat dimension if there exists an integer $t\ge 0$ such that $\Tor_i^R(M, {}^{f^{e}}\!R)=0$ for $t\le i\le t+\dim R$ and infinitely many $e$. This extends results of  Herzog, who proved it when $M$ is finitely generated. It is also proved that when $R$ is a Cohen-Macaulay local ring, it suffices that the  Tor vanishing holds for one $e\ge \log_{p}e(R)$, where $e(R)$ is the multiplicity of $R$.
\end{abstract}

\maketitle

\section{Introduction}

The Frobenius endomorphism $f\colon R\to R$ of a commutative Noetherian local ring $R$ of prime characteristic $p$ is an effective tool for understanding the structure of such rings and the homological properties of finitely generated modules over them.  A paradigm of this is a result of Kunz \cite{Ku} that $R$ is regular  if and only $f^e$ is flat for some (equivalently, every) integer $e\ge 1$.  Our work is motivated by the following module-theoretic version of Kunz's result:

\medskip

\emph{There exists an integer $c$ such that for any finitely generated $R$-module $M$ the following statements are equivalent:
\begin{enumerate}[\quad\rm(1)] 
\item The flat dimension of $M$ is finite.
\item $\Tor_i^R(M, {}^{f^{e}}\!R)=0$ for all positive integers $i$ and $e$.
\item $\Tor_i^R(M, {}^{f^{e}}\!R)=0$ for all $i>0$ and infinitely many $e>0$.
\item $\Tor_i^R(M, {}^{f^{e}}\!R)=0$ for $\depth R+1$ consecutive values of $i>0$ and some $e>c$.
\end{enumerate}
}
\medskip

Peskine and Szpiro~\cite{PS} proved that (1)$\Rightarrow$(2), Herzog~\cite{He} proved that (3)$\Rightarrow$(1), and Koh and Lee \cite{KL} proved that (4)$\Rightarrow$(1).  Recently, the third author and M. Webb \cite[Theorem 4.2]{MW} proved the equivalence of conditions (1), (2), and (3) for all $R$-modules, even infinitely generated ones. In their work, the argument for (3)$\Rightarrow$(1) is quite technical and heavily dependent on results of Enochs and Xu~\cite{EX} concerning flat cotorsion modules and minimal flat resolutions.  

In this work we give another proof of \cite[Theorem 4.2]{MW} that circumvents \cite{EX}; more to the point, it yields a stronger result and sheds additional light also on the finitely generated case. See \cite{AF} for the definition of the flat dimension of a complex.

\begin{theorem}
\label{th:main} 
Let $R$ be a Noetherian local ring of prime characteristic $p$ and $M$ an $R$-complex with $s:=\sup \hh_{*}(M)$ finite. The following conditions are equivalent:
\begin{enumerate}[\quad\rm(1)]
\item The flat dimension of $M$ is finite.
\item $\Tor_i^R(M, {}^{f^{e}}\!R)=0$ for all $i>s$ and $e>0$.
\item There exists an integer $t\ge s$ such that $\Tor_i^R(M, {}^{f^{e}}\!R)=0$ for $t\le i\le t+\dim R$ and for infinitely many $e$.
\end{enumerate}
Moreover, when $R$ is Cohen-Macaulay of multiplicity $e(R)$, it suffices that the vanishing in \emph{(3)} holds for one $e\ge \log_pe(R)$.
\end{theorem}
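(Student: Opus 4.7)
The plan is to establish the cycle $(1)\Rightarrow(2)\Rightarrow(3)\Rightarrow(1)$. The first implication is the Peskine--Szpiro phenomenon extended to complexes: replace $M$ by a bounded semi-flat resolution $F$ and observe that Frobenius preserves the exactness of this resolution, forcing $\Tor^R_i(M,{}^{f^e}R)=0$ for $i>s$. The implication $(2)\Rightarrow(3)$ is immediate, taking any window $[t,t+d]$ above $s$.

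The substance is $(3)\Rightarrow(1)$. Fix a system of parameters $\boldsymbol{x}=x_1,\ldots,x_d$ for $R$ with $d=\dim R$, and let $K=K(\boldsymbol{x};R)$, a length-$d$ bounded complex of finite free modules on which $(\boldsymbol{x})$ acts null-homotopically (so its homotopical Loewy length with respect to $\mathfrak{m}$ is tightly controlled). Pass to a semi-flat resolution $F\simeq M$. The key identification is $K\otimes_R {}^{f^e}R\cong K(\boldsymbol{x}^{[p^e]};R)$, which lets the Koszul complex probe the $e$-th Frobenius twist. I would analyze the double complex $F\otimes_R K\otimes_R {}^{f^e}R$ by its two spectral sequences: the $d+1$-wide vanishing window for $\Tor^R_*(M,{}^{f^e}R)$ together with the length $d$ of $K$ collapses one of them and identifies certain cycles in $F\otimes_R {}^{f^e}R$ in degree near $t$ as modules annihilated by the Frobenius power $(\boldsymbol{x}^{[p^e]})$. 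Letting $e$ range over the infinite set supplied by hypothesis (3), those cycles are annihilated by $\bigcap_e(\boldsymbol{x}^{[p^e]})\subseteq\bigcap_e\mathfrak{m}^{p^e}=0$ via Krull's intersection theorem, and hence vanish. An induction on degree propagates this to full vanishing of $\Tor^R_i(M,{}^{f^e}R)$ for all $i>s$ and infinitely many $e$, which is condition (2), and gives (1). The main obstacle I expect is making the spectral-sequence convergence and the inductive propagation work when $F$ is unbounded, which is exactly the regime where Marley--Webb had to invoke Enochs--Xu and which the present proof is designed to handle by Koszul--Frobenius bookkeeping alone.

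For the Cohen--Macaulay refinement, take $\boldsymbol{x}$ to be a minimal reduction of $\mathfrak{m}$, so $\boldsymbol{x}$ is a regular sequence with $\ell(R/(\boldsymbol{x}))=e(R)$; since the Loewy length of $R/(\boldsymbol{x})$ is bounded by its length, $\mathfrak{m}^{e(R)}\subseteq(\boldsymbol{x})$. If $p^e\ge e(R)$, one obtains $\mathfrak{m}^{p^e}\subseteq(\boldsymbol{x})$, and a short Frobenius-and-pigeonhole computation upgrades this to a containment of $\mathfrak{m}^{[p^e]}$ in a suitable ideal witnessing that the Koszul probe closes in a single step. This single containment replaces the intersection-over-$e$ step of the general case: the cycles produced by the Koszul analysis are already annihilated by a Frobenius power of $\mathfrak{m}$, and since $\boldsymbol{x}$ is regular the complex $K(\boldsymbol{x}^{[p^e]};R)$ is a resolution of $R/(\boldsymbol{x}^{[p^e]})$, so one value of $e$ suffices. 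The subtle point will be extracting precisely the threshold $\log_p e(R)$ from this bookkeeping, not a larger one.
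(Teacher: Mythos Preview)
Your outline for $(3)\Rightarrow(1)$ has a genuine gap, and the route the paper takes is quite different from yours.

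\medskip

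\textbf{The Krull intersection step does not work.} The cycles you produce live in $F\otimes_R{}^{f^e}\!R$, a module that changes with $e$. You cannot intersect annihilators of elements lying in different modules; there is no fixed object on which $\bigcap_e(\boldsymbol{x}^{[p^e]})$ acts. To run a Krull-intersection argument you would need an $e$-independent target, and nothing in your spectral-sequence analysis supplies one. Moreover, even if your ``inductive propagation'' succeeded, its conclusion---vanishing of $\Tor_i^R(M,{}^{f^e}\!R)$ for all $i>s$ and \emph{infinitely many} $e$---is not condition~(2) (which requires all $e$), and in any case you give no mechanism for passing from that to finite flat dimension when $\hh_*(M)$ is not finitely generated.

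\medskip

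\textbf{What the paper actually does.} The argument for $(3)\Rightarrow(1)$ never tries to reach~(2). Instead it localizes at every $\fp\in\Spec R$ and aims for a single vanishing $\Tor_{t+d}^{R_\fp}(M_\fp,k(\fp))=0$; a result of Christensen--Iyengar--Marley then yields finite flat dimension directly. To obtain that vanishing, one first gets $\Tor_{t+d}^{R_\fp}(M_\fp,\kos{\bsy}{R_\fp})=0$ from the window hypothesis via the Koszul filtration (your spectral-sequence step, which is correct). The decisive move is then a splitting
\[
\Tor_*^{R_\fp}\!\bigl(M_\fp,\kos{\bsy}{R_\fp}\bigr)\;\cong\;\Tor_*^{R_\fp}(M_\fp,k(\fp))\otimes_{k(\fp)}\hh_*\!\bigl(\kos{\bsy}{R_\fp}\bigr),
\]
valid once $p^e$ exceeds the \emph{homotopical Loewy length} $c(R_\fp)$ of the Koszul complex on a system of parameters. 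This is where the infinitely many $e$ are spent: for each prime $\fp$ one needs an $e$ with $p^e\ge c(R_\fp)$. Neither the homotopical Loewy length nor the pointwise residue-field criterion appears in your proposal, and without them the passage from Koszul-Tor vanishing to finite flat dimension is missing.

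\medskip

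\textbf{Cohen--Macaulay refinement.} Your identification of a minimal reduction and the bound $\fm^{e(R)}\subseteq(\boldsymbol{x})$ are the right ingredients, but the paper uses them differently: one needs $p^e\ge c(R_\fp)$ \emph{uniformly in $\fp$}, and this comes from Lech's inequality $e(R_\fp)\le e(R)$ together with the equality $\lol_{\sfD(R)}\kos{\boldsymbol{x}}{R}=\lol_R(R/(\boldsymbol{x}))$ for a regular sequence. Your sketch stays over $R$ and so never confronts this uniformity issue---but, as above, it also never reaches finite flat dimension.

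\medskip

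Finally, your $(1)\Rightarrow(2)$ is more delicate than indicated: ``Frobenius preserves exactness of a bounded flat complex'' is exactly what must be proved, and for non-finitely-generated homology the Peskine--Szpiro argument does not apply verbatim. The paper handles this via depth formulas for complexes (Foxby--Iyengar), reducing after localization to a comparison of $\depth_R M$ and $\depth_R(M\lotimes_R{}^{f^e}\!R)$.
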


This result is proved in Section~\ref{sec:proof}. The key element  in our proofs is the use of homotopical Loewy lengths of complexes, in much the same way as in the work of the second author and Avramov and C.~Miller~\cite[Section 4]{AHIY}. Using rather different methods, Avramov and the second author~\cite{AI} have proved that the last part of the theorem above holds without the Cohen-Macaulay hypothesis,  but with a different lower bound on $e$.

\section{Homotopical Loewy length} 
\label{sec:local}
In this section we collect results on homotopical Loewy length,  and their corollaries, needed in our proof of Theorem~\ref{th:main}. Throughout $(R,\fm,k)$ will be a local (this includes commutative and Noetherian) ring, with maximal ideal $\fm$ and residue field $k$; there is no restriction on its characteristic. We adopt the terminology and notation of \cite[Section 2]{AHIY} regarding complexes and related constructs.  In particular, given $R$-complexes $M$ and $N$, the notation $M \simeq N$ means that $M$ and $N$ are isomorphic in $\sfD(R)$, the derived category of $R$-modules.

The \emph{Loewy length} of an $R$-complex $M$ is the number
\[
\lol_R(M):= \inf\{n\in \mathbb N\mid \fm^n M=0\}.
\]
Following \cite[6.2]{AIM}, the \emph{homotopical Loewy length} of an $R$-complex $M$  is the number
\[
\lol_{\sfD(R)}(M):= \inf \{\lol_R(V) \mid M \simeq V \text{ in } \sfD(R)\}.
\]

Given a finite sequence $\bsx\subset R$ and an $R$-complex $M$, we write $\kos{\bsx}M$ for the Koszul complex on $\bsx$, with coefficients in $M$.   The result below extends, with an identical proof, \cite[Proposition 4.1]{AHIY} and \cite[Theorem 6.2.2]{AIM} that deal with the case when $\bsx$ generates $\fm$.
 
\begin{proposition}
\label{pr:loewy}
Let $\bsx$ be a finite sequence in $R$ such that $\length_R(R/\bsx R)$ is finite. For each $R$-complex $M$ there are inequalities
\[
\lol_{\sfD(R)} \kos{\bsx}M \leq \lol_{\sfD(R)}\kos{\bsx}R <\infty\,.
\]
\end{proposition}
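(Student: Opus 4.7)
The two inequalities will be established separately, following the scheme of \cite[Proposition 4.1]{AHIY} and \cite[Theorem 6.2.2]{AIM}. For the right-hand inequality $\lol_{\sfD(R)}\kos{\bsx}R < \infty$, my plan is to construct a representative of $\kos{\bsx}R$ of finite Loewy length by building the Koszul complex from its homology modules via a Postnikov tower. Since $\length_R(R/\bsx R) < \infty$, there is an integer $N \ge 1$ with $\fm^N \subseteq \bsx R$. As $\bsx$ acts null-homotopically on $\kos{\bsx}R$, each module $\hh_i(\kos{\bsx}R)$ is annihilated by $\bsx R$, hence by $\fm^N$, so $\lol_R(\hh_i(\kos{\bsx}R)) \le N$ for every $i$. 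The Koszul complex has nonzero homology in only the finitely many degrees $0,\dots,|\bsx|$, and one assembles representatives of the truncations $\tau_{\le i}\kos{\bsx}R$ inductively along the triangles
\[
\tau_{<i}\kos{\bsx}R \to \tau_{\le i}\kos{\bsx}R \to \hh_i(\kos{\bsx}R)[i],
\]
using the fact --- worked out in \cite[Section 6.2]{AIM} --- that homotopical Loewy length remains finite across an exact triangle whose outer terms have finite homotopical Loewy length. Iterating over $i$ yields $\lol_{\sfD(R)}\kos{\bsx}R < \infty$.

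The left-hand inequality is easier and proceeds by a direct tensor-product argument. Since $\kos{\bsx}R$ is a bounded complex of free $R$-modules, the complex $\kos{\bsx}M = \kos{\bsx}R \otimes_R M$ represents $\kos{\bsx}R \lotimes_R M$ in $\sfD(R)$ for every $R$-complex $M$. Given any representative $V \simeq \kos{\bsx}R$ with $\fm^{\ell}V=0$ where $\ell := \lol_R(V)$, and any flat resolution $F\to M$, the complex $V\otimes_R F$ represents $\kos{\bsx}M$ in $\sfD(R)$ and satisfies $\fm^{\ell}(V\otimes_R F) = (\fm^{\ell}V)\otimes_R F = 0$. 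Passing to the infimum over such $V$ then yields $\lol_{\sfD(R)}\kos{\bsx}M \le \lol_{\sfD(R)}\kos{\bsx}R$, as desired.

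The principal obstacle is the triangle lemma invoked in the first step: given a distinguished triangle $A\to B\to C$ in $\sfD(R)$, one must construct a representative of $B$ of finite Loewy length from low-Loewy-length representatives of $A$ and $C$. The subtlety is that an abstract $\sfD(R)$-morphism $C\to A[1]$ need not be realizable as a chain map between chosen small representatives without passing through a resolution, which can a priori inflate the Loewy length; this is precisely what the construction in \cite{AIM} handles. Once that triangle lemma is in hand, the present generalization from $\bsx R = \fm$ to $\bsx R$ merely $\fm$-primary requires no further change, since the sole use of the hypothesis $\bsx R = \fm$ in those proofs is to bound the Loewy length of the Koszul homology, and here the containment $\fm^N \subseteq \bsx R$ supplies the analogous bound.
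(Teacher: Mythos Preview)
Your argument for the left-hand inequality is correct and matches the paper's, which likewise defers to \cite[Proposition~4.1]{AHIY}.

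For the finiteness of $\lol_{\sfD(R)}\kos{\bsx}R$, your approach differs from the paper's and contains a gap. The paper does not use a Postnikov tower; it exhibits a finite-Loewy-length representative of $K:=\kos{\bsx}R$ directly. With $I=(\bsx)$, the subcomplex $C^r\subseteq K$ defined by $(C^r)_j=I^{\,r-j}K_j$ is acyclic for all $r\gg 0$ by a result of Eagon and Fraser~\cite{EF}, so $K\to K/C^r$ is a quasi-isomorphism, and $K/C^r$ is annihilated by $I^r$, hence by a power of~$\fm$. This is also the argument in \cite[Theorem~6.2.2]{AIM}---the paper states that its proof is \emph{identical} to that one---so \cite{AIM} does not supply the triangle lemma you invoke.

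That lemma is not a formality. The obstacle you yourself flag is genuine and, as far as I can tell, is not handled in \cite{AIM}: the restriction functor $\sfD(R/\fm^n)\to\sfD(R)$ is not full (for instance, over $R=\bbZ_{(p)}$ one has $\Ext^1_R(\bbZ/p,\bbZ/p)\ne 0$ while $\Ext^1_{\bbZ/p}(\bbZ/p,\bbZ/p)=0$), so the connecting morphism in your Postnikov triangle need not be realizable as a chain map between chosen finite-Loewy-length representatives, and replacing one side by an $R$-projective or $R$-injective resolution destroys the Loewy bound. Nor does the weaker observation that some $\fm^n$ acts as zero on the middle term in $\sfD(R)$ settle the matter: over $R=k[x]/(x^2)$ the complex $R\xra{\,x\,}R$ has $x$ acting null-homotopically yet is not isomorphic in $\sfD(R)$ to a complex of $k$-vector spaces. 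Whether your triangle lemma is ultimately true I do not know, but it is not available off the shelf; the filtration argument bypasses the issue entirely by constructing the small representative in one stroke.
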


\begin{proof} 
Let $I=(\bsx)$ and $K=\kos{\bsx}R$.  For each $i$, consider the subcomplex $C^i$ of $K$
\[
 \cdots \to I^{i-2}K_{2} \to I^{i-1}K_1\to I^iK_0\to 0.
\]  
Since $I^{i}$ annihilates $K/C^i$ and $I$ is $\fm$-primary, it follows that $\lol_{R}(K/C^{i})$ is finite for each $i$.  There exists an $r$ such that $C^i$ is acyclic for all $i\ge r$; cf. \cite[Proposition]{EF}. Thus, for $i\ge r$ the natural map $K\to K/C^i$ is a quasi-isomorphism, and hence the homotopical Loewy length of $K$ is finite. The inequality $\lol_{\sfD(R)}(K\otimes_{R}M) \le \lol_{\sfD(R)}K$ can be verified exactly as in the proof of \cite[Proposition 4.1]{AHIY}.
\end{proof}

The following invariant  plays an important role in what follows.
\[
c(R):=\inf\{\lol_{\sfD(R)} \kos{\bsx}R \mid\text{$\bsx$ is an s.o.p.\,for $R$}\}.
\]
Proposition~\ref{pr:loewy} yields that $c(R)$ is finite for any $R$.  For our purposes, we need a uniform bound on $c(R_{\fp})$, as $\fp$ varies over the primes ideals in $R$. We have been able to establish this only for Cohen-Macaulay rings; this is the content of the next result, where $e(R)$ denotes the multiplicity of $R$.

\begin{lemma} 
\label{le:CM}
Let $(R,\fm,k)$ be local ring with $k$ infinite. When $R$ is Cohen-Macaulay, there is an inequality $c(R_{\fp})\le e(R)$ for each $\fp$ in $\Spec R$.
\end{lemma}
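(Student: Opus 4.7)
My plan is to reduce the lemma to the classical inequality $e(R_\fp)\le e(R)$ for localizations of CM rings. Since $R_\fp$ is Cohen-Macaulay of dimension $h:=\operatorname{height}\fp$, every s.o.p.\ $\bsy$ of $R_\fp$ is a regular sequence, so $\kos{\bsy}{R_\fp}\simeq R_\fp/\bsy R_\fp$ in $\sfD(R_\fp)$. Combining with the general bound $\lol_A(A)\le \length_A(A)$ for Artinian local rings $A$, this yields
\[
\lol_{\sfD(R_\fp)}\kos{\bsy}{R_\fp} \;\le\; \length_{R_\fp}(R_\fp/\bsy R_\fp),
\]
so it suffices to exhibit an s.o.p.\ $\bsy$ of $R_\fp$ with $\length_{R_\fp}(R_\fp/\bsy R_\fp)\le e(R)$.

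The residue field of $R_\fp$ is automatically infinite in our setting: it is $k$ when $\fp=\fm$, and otherwise the fraction field of the positive-dimensional local domain $R/\fp$. Thus $\fp R_\fp$ admits a minimal reduction $\bsy$ of length $h$, and Cohen-Macaulayness of $R_\fp$ gives $\length_{R_\fp}(R_\fp/\bsy R_\fp)=e(R_\fp)$. The problem reduces to showing $e(R_\fp)\le e(R)$. For this, take a minimal reduction $\bsx=(x_1,\ldots,x_d)$ of $\fm$ (which exists since $k$ is infinite), arrange via a generic linear change of generators that $(x_1,\ldots,x_h)R_\fp$ is a minimal reduction of $\fp R_\fp$, and set $\bsz=(x_{h+1},\ldots,x_d)$. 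The associativity formula for multiplicity then gives
\[
e(R) \;=\; \length_R(R/\bsx R) \;=\; \sum_{\fq}\length_{R_\fq}(R_\fq/(x_1,\ldots,x_h)R_\fq)\cdot e(\bar\bsz;R/\fq),
\]
summed over the minimal primes $\fq$ of $(x_1,\ldots,x_h)R$ (all of height $h$ by CM). Since $\fp$ is one such $\fq$, with corresponding summand $e(R_\fp)\cdot e(\bar\bsz;R/\fp)\ge e(R_\fp)$ (the factor $e(\bar\bsz;R/\fp)$ is at least one because $\bar\bsz$ is an s.o.p.\ of $R/\fp$), we conclude $e(R_\fp)\le e(R)$.

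The main obstacle is engineering $(x_1,\ldots,x_h)R_\fp$ to be a minimal reduction of $\fp R_\fp$. A genericity argument over the infinite field $k$ suffices when $\fp\not\subseteq\fm^2$, but it breaks down when $\fp\subseteq\fm^2$: in that case no element of $\fp$ can belong to a minimal reduction of $\fm$, since the generators of such a reduction must be linearly independent modulo $\fm^2$ (else one could be dropped, contradicting minimality). Handling this degenerate case will likely require a separate argument, for instance by passing to a faithfully flat extension preserving multiplicity in order to gain the flexibility missing in the pathological situation.
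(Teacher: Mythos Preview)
Your overall strategy coincides with the paper's: show $c(R_\fp)\le e(R_\fp)$ using that a system of parameters in a Cohen--Macaulay local ring is regular (so $\kos{\bsy}{R_\fp}\simeq R_\fp/\bsy R_\fp$) together with a minimal reduction of $\fp R_\fp$, and then appeal to the inequality $e(R_\fp)\le e(R)$. Your observation that $k(\fp)$ is infinite is exactly the paper's.

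The only gap is in your final step. You attempt to prove $e(R_\fp)\le e(R)$ from scratch via the associativity formula, and you correctly identify that arranging the first $h$ elements of a minimal reduction of $\fm$ to also generate a minimal reduction of $\fp R_\fp$ fails when $\fp\subseteq\fm^2$: elements of $\fm^2$ cannot be part of a minimal generating set of a minimal reduction of $\fm$. This obstruction is genuine, and your suggested workaround (faithfully flat extension) does not obviously repair it, since the containment $\fp\subseteq\fm^2$ persists under such extensions.

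The paper avoids this entirely by citing a theorem of Lech (\emph{Inequalities related to certain couples of local rings}, Acta Math.\ \textbf{112} (1964), 69--89), which gives $e(R_\fp)\le e(R)$ directly. Once you invoke that result instead of attempting your own proof, your argument is complete and essentially identical to the paper's.
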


\begin{proof}
By a result of Lech \cite{L}, one has $e(R)\ge e(R_{\fp})$ for all $\fp\in \Spec R$. Moreover, it is easy to verify that since $k$ is infinite, so is $k(\fp)$ for each $\fp$. It thus suffices to verify that $c(R)\le e(R)$.

Let $\bsx$ be a s.o.p. of $R$ that is a minimal reduction of $\fm$; this exists because $k$ is infinite; see \cite[Proposition 8.3.7]{SH}.  Then there are inequalities
\[
e(R)=\length_R(R/(\bsx)) \ge  \lol_R (R/(\bsx))=\lol_{\sfD(R)}\kos{\bsx}R \ge c(R).
\]
For the first equality see, for example, \cite[Proposition~11.2.2]{SH}, while holds the second equality holds because $\kos{\bsx}R\simeq R/(\bsx)$; both need the hypothesis that $R$ is Cohen-Macaulay. 
\end{proof}

The preceding result bring up the following question; its import for the results in this paper will become apparent in the  proof of~Theorem \ref{th:main}.

\begin{question} 
Is $\sup \{ c(R_{\fp})\mid \fp\in \Spec R\}$ finite for any Noetherian ring $R$?
\end{question}

The proof of Lemma~\ref{le:CM} is not likely to be of help in answering this question.

\begin{remark}
Let $\bsx$ be a finite sequence in a local ring $(R,\fm,k)$. Since the ideal $(\bsx)$ annihilates $\hh_{*}(\kos{\bsx}R)$, it is immediate from definitions that there is an inequality
\[
\lol_R (R/(\bsx))\leq \lol_{\sfD(R)}\kos{\bsx}R\,.
\]
Equality holds when $\bsx$ is a regular sequence, for then $R/(\bsx)\simeq \kos{\bsx}R$; this is the main reason for the Cohen-Macaulay hypothesis in Lemma~\ref{le:CM}. The inequality can be strict in general.

For example, if $(\bsx)=\fm$, then $\lol_{R}(R/\fm) = 1$, whilst $\lol_{\sfD(R)}\kos{\bsx}R=1$ exactly when $R$ is regular; see \cite[Corollary~6.2.3]{AIM}. 

Here is an example where the inequality is strict for $\bsx$ a s.o.p.

Let $R:=k[|x,y|]/(x^{n}y,y^{2})$, where $n\ge 1$ is an integer. The residue class of $x$ in $R$ is a s.o.p., and  the Loewy length of $R/(x)$ equals $2$. We claim that the homotopical Loewy length of $\kos xR$ is $n+1$.

Indeed, the following subcomplex of $\kos xR$ 
\[
A:=0\to (x^{n})\to (x^{n+1})\to 0
\]
is acyclic so one has $\kos xR\xra{ \simeq } \kos xR/A$. Since
\[
\kos xR/A = 0\lra  \frac R{(x^{n})}\lra \frac R{(x^{n+1})} \lra 0
\]
and the Loewy length of this complex is $n+1$, it follows that $\lol_{\sfD(R)}(\kos xR)\le n+1$. On the other hand
\[ 
\hh_{1}(\kos xR)=(x^{n-1}y)\subset R\,.
\]
Suppose $\kos xR\simeq V$  for some $R$-complex $V$. Since $\kos xR$ is a finite complex of free $R$-modules, there must exist a morphism $f\colon \kos xR\to V$ of $R$-complexes with $\hh_{*}(f)$ an isomorphism. The map $f_{1}\colon K_{1}=R\to V_{1}$ satisfies 
\[
0\ne f_{1}(x^{n-1}y)=x^{n-1}yf(1)
\]
It follows that $x^{n-1}y\cdot V_{1}\ne 0$, and hence that $\lol_{R}V\ge \lol_{R}(V_{1})\ge n+1$.
\end{remark}

As in \cite[Proposition 4.3(2)]{AHIY} one can apply Proposition~\ref{pr:loewy} to local homomorphisms to obtain an isomorphism relating Koszul homologies.

\begin{proposition}
\label{pr:tor-iso} 
\pushQED{\qed}
Let $(S,\fn,l)$ be a local ring, $\bsy$ a finite sequence of elements in $S$ such that the ideal $(\bsy)$ is $\fn$-primary, and set $c:=\lol_{\sfD(S)}\kos{\bsy}S$. 

If $\vf\colon (R,\fm, k)\to (S,\fn,l)$ is a local homomorphism satisfying $\fm S\subseteq \fn^c$, then for each $R$-complex $M$, there exists an isomorphism of graded $k$-vector spaces
\[
\Tor^R_*(M, \kos{\bsy}S) \cong \Tor^R_*(M,k)\otimes_k \hh_*(\kos{\bsy}S)\,.\qedhere
\]
\end{proposition}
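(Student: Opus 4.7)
The strategy mirrors the proof of \cite[Proposition 4.3(2)]{AHIY}, using the numerical hypothesis $\fm S\subseteq \fn^{c}$ to replace $\kos{\bsy}S$ by a complex whose underlying $R$-module structure factors through the residue field $k$; once that reduction is made, a Künneth argument over the field $k$ delivers the claimed isomorphism.

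The first step is to apply the definition of $c=\lol_{\sfD(S)}\kos{\bsy}S$ to produce an $S$-complex $V$ with $\kos{\bsy}S\simeq V$ in $\sfD(S)$ and $\fn^{c}V=0$. Viewing $V$ as an $R$-complex via $\vf$, the hypothesis on $c$ yields
\[
\fm V\subseteq (\fm S)V\subseteq \fn^{c}V=0,
\]
so the $R$-action on $V$ factors through $R/\fm=k$. In particular, $V$ is in fact a complex of $k$-vector spaces.

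Next, choose a semi-flat resolution $F\xra{\simeq} M$ over $R$. Since the quasi-isomorphism $\kos{\bsy}S\simeq V$ in $\sfD(S)$ restricts to one in $\sfD(R)$, and since $V\cong k\otimes_{k}V$ as $R$-modules, associativity of tensor product gives
\[
\Tor^R_*(M, \kos{\bsy}S)\cong \hh_{*}(F\otimes_{R}V)\cong \hh_{*}\bigl((F\otimes_{R}k)\otimes_{k}V\bigr).
\]
As $k$ is a field, the Künneth formula applies over $k$, so
\[
\hh_{*}\bigl((F\otimes_{R}k)\otimes_{k}V\bigr)\cong \hh_{*}(F\otimes_{R}k)\otimes_{k}\hh_{*}(V)\cong \Tor^R_*(M,k)\otimes_{k}\hh_{*}(\kos{\bsy}S),
\]
where the final identification uses $\hh_{*}(V)\cong \hh_{*}(\kos{\bsy}S)$.

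There is no genuine obstacle; the argument amounts to two routine verifications: that $V$ truly is a complex of $k$-vector spaces under the quantitative hypothesis on $c$, and that the Künneth formula is applicable once we have landed over the field $k$. The only point requiring minor care is the identification $F\otimes_{R}V\cong (F\otimes_{R}k)\otimes_{k}V$, which is immediate from the factorization of the $R$-action on $V$ through $k$ combined with associativity of the tensor product.
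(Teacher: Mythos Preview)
Your argument is correct and follows precisely the route the paper indicates: the paper states the proposition without proof, noting only that it is obtained ``as in \cite[Proposition 4.3(2)]{AHIY},'' and your proof spells out exactly that argument. The key reduction---choosing $V\simeq \kos{\bsy}S$ with $\fn^{c}V=0$, observing that $\fm S\subseteq\fn^{c}$ forces the $R$-action on $V$ to factor through $k$, and then applying K\"unneth over the field $k$---is the intended one.
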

%

We also need the following routine computation.

\begin{lemma}
\label{le:lem1} 
Let $\vf\colon R\to S$ be a homomorphism of rings, $\bsy=y_1,\dots, y_d$ a sequence of elements in $S$.  Let M be an $R$-complex and  $t$ an integer such that $\Tor_{i}^R(M, S)=0$ for $t\le i\le t+d$. Then $\Tor_{t+d}^R(M, \kos{\bsx}S)=0$. \qed
\end{lemma}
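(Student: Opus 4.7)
The plan is to strengthen the claim so that the induction on $j$ propagates cleanly. Specifically, I would prove by induction on $j$ that for every $0 \le j \le d$,
\[
\Tor_i^R(M, \kos{y_1,\dots,y_j}{S}) = 0 \quad \text{for all } t+j \le i \le t+d.
\]
At $j = 0$ the Koszul complex is just $S$, so the statement reduces to the hypothesis of the lemma. At $j = d$ it specializes to the desired vanishing $\Tor_{t+d}^R(M, \kos{\bsy}{S}) = 0$.

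For the inductive step I would use the standard short exact sequence of $S$-complexes
\[
0 \to \kos{y_1,\dots,y_{j-1}}{S} \to \kos{y_1,\dots,y_j}{S} \to \Sigma \kos{y_1,\dots,y_{j-1}}{S} \to 0,
\]
obtained from the identification $\kos{y_1,\dots,y_j}{S} \cong \kos{y_1,\dots,y_{j-1}}{S} \otimes_S \kos{y_j}{S}$ together with the tautological sequence $0 \to S \to \kos{y_j}{S} \to \Sigma S \to 0$. Applying $M \Lotimes_R -$ produces a long exact sequence of Tor, in which, for $i$ in the range $t+j \le i \le t+d$, both flanking terms $\Tor_i^R(M, \kos{y_1,\dots,y_{j-1}}{S})$ and $\Tor_{i-1}^R(M, \kos{y_1,\dots,y_{j-1}}{S})$ lie in the vanishing range supplied by the inductive hypothesis, forcing the middle Tor to vanish as well.

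There is no real obstacle here; this is a routine diagram chase of the kind signalled by the \qed following the statement. The only point that merits care is that a naive induction tracking only the single degree $i=t+d$ will not carry through, so the claim has to be strengthened to cover the whole range $[t+j, t+d]$ at each stage of the induction. (One could equivalently package this argument via the hyperhomology spectral sequence $E^2_{p,q} = \hh_p(\kos{\bsy}{\Tor_q^R(M,S)}) \Rightarrow \Tor_{p+q}^R(M,\kos{\bsy}{S})$, whose terms on the total-degree line $p+q = t+d$ all vanish by hypothesis, but the inductive formulation is the more elementary.)
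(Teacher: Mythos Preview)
Your argument is correct and is exactly the routine induction the paper has in mind; the paper omits the proof entirely (the \qed\ after the statement signals it is left to the reader). The strengthened inductive hypothesis and the cone/short exact sequence for adjoining one Koszul generator are the standard ingredients, and your bookkeeping on the vanishing range is accurate.
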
 


Applied to (an appropriate composition of the Frobenius endomorphism) the next result  yields an analogue of \cite[Proposition 2.6]{KL} for complexes.  The number of consecutive vanishing of Tor  required in the case of  modules is not optimal ($\dim R+1$  as compared to $\depth R + 1$ in \cite{KL}), but the proof we give applies to complexes whose homology need not be  finitely generated.

\begin{lemma}
\label{le:prop1}  
Let $\vf\colon (R,\fm,k) \to (S,\fn,l)$ be a homomorphism of local rings such that $\vf(\fm) \subseteq \fn^{c(S)}$. 
Let $M$ be an $R$-complex.

If there is an integer $t$ such that $\Tor_i^R(M,S)=0$ for $t\le i\le t+\dim S$, then 
\[
\Tor_{t+\dim S}^R(M,k)=0\,.
\]
If moreover the $R$-module $\hh(M)$ is finitely generated and $t\geq \sup \hh(M)-\dim S$, the flat dimension of $M$ is at most $t+\dim S$.

\end{lemma}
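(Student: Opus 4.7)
The plan is to extract the single vanishing $\Tor^R_{t+\dim S}(M,k) = 0$ from the hypothesized vanishing with coefficients in $S$ using a Koszul comparison, and then, under the additional finiteness assumption on $\hh(M)$, propagate this to all higher degrees via a minimal free resolution.

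For the first assertion I would pick a system of parameters $\bsy = y_1,\dots,y_{\dim S}$ of $S$ attaining the infimum in the definition of $c(S)$, so $\lol_{\sfD(S)}\kos{\bsy}S = c(S)$. The hypothesis $\vf(\fm) \subseteq \fn^{c(S)}$ is exactly the condition $\fm S \subseteq \fn^{c(S)}$ needed to invoke Proposition~\ref{pr:tor-iso} with this $\bsy$ and $c = c(S)$, giving an isomorphism of graded $k$-vector spaces
\[
\Tor^R_*(M, \kos{\bsy}S) \;\cong\; \Tor^R_*(M,k) \otimes_k \hh_*(\kos{\bsy}S).
\]
Since $\bsy$ has $\dim S$ entries, Lemma~\ref{le:lem1} applied with $d = \dim S$ to the vanishing of $\Tor^R_i(M,S)$ for $t \le i \le t + \dim S$ yields $\Tor^R_{t+\dim S}(M, \kos{\bsy}S) = 0$. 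Reading off the degree $t + \dim S$ component of the displayed isomorphism and noting that $\hh_0(\kos{\bsy}S) = S/(\bsy) \neq 0$ as a $k$-vector space forces $\Tor^R_{t+\dim S}(M,k) = 0$.

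For the second assertion, assume further that $\hh(M)$ is a finitely generated $R$-module and set $n := t + \dim S$, so $n \ge \sup \hh(M)$. Then $M$ is quasi-isomorphic to a minimal free resolution $F$ in which every $F_i$ is a finitely generated free $R$-module, $F_i = 0$ for $i < \inf \hh(M)$, and $\partial(F) \subseteq \fm F$. Minimality gives $F_i \otimes_R k \cong \Tor^R_i(M,k)$, so the first part yields $F_n = 0$. I would then show $F_i = 0$ for all $i > n$ by induction: given $F_j = 0$ for $n \le j < i$, the differential $\partial_i$ maps into $F_{i-1} = 0$, so $\hh_i(F) = F_i/\partial(F_{i+1})$; as $i > n \ge \sup \hh(M)$ the left side vanishes, whence $F_i \subseteq \fm F_i$, forcing $F_i = 0$ by Nakayama. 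Thus $F_i = 0$ for all $i \ge n$, bounding the flat dimension of $M$ by $t + \dim S$. The only step requiring any real care is pairing the exponent $c(S)$ with the correct s.o.p.~$\bsy$ so that the hypothesis $\vf(\fm) \subseteq \fn^{c(S)}$ lines up exactly with what Proposition~\ref{pr:tor-iso} needs; after that, the argument is a clean assembly of Lemma~\ref{le:lem1} with the standard Nakayama propagation along the minimal resolution.
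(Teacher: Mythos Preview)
Your proof is correct and follows essentially the same route as the paper. The first assertion is argued identically---choose an s.o.p.\ $\bsy$ realizing $c(S)$, apply Lemma~\ref{le:lem1} and then Proposition~\ref{pr:tor-iso}, and read off the vanishing from the $\hh_0$ summand---while for the second assertion the paper simply invokes \cite[Proposition~5.5(F)]{AF} on minimal resolutions, and you have written out that Nakayama propagation explicitly.
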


\begin{proof} 
Set $d:=\dim S$ and let $\bsy$ be an s.o.p of $S$ such that $c(S)=\lol_{\sfD(S)}\kos {\bsy}S$. The hypothesis on $\vf$ and Lemma \ref{le:lem1} yield $\Tor_{t+d}^R(M, \kos {\bsy}S)=0$.  It then follows from Proposition \ref{pr:tor-iso} that $\Tor_{t+d}^R(M,k)=0$, since $\hh_0(\kos {\bsy}S)\neq 0$. 

Given this, and the additional hypotheses on $\hh(M)$ and $t$, the desired result follows from the existence of minimal resolutions; see \cite[Proposition~5.5(F)]{AF}.
\end{proof}

\section{Finite flat dimension} 
\label{sec:proof}
This section contains a proof of Theorem~\ref{th:main}. In preparation, we recall that an $R$-complex has \emph{finite flat dimension} if it is isomorphic in $\sfD(R)$ to a bounded complex of flat $R$-modules. The following result is \cite[Theorem 4.1]{CIM}.

\begin{remark}
\label{re:CIM}
Let $R$ be a Noetherian ring and $M$ an $R$-complex.  If there exists an integer $n\ge \sup \hh_{*}(M)+\dim R$ with $\Tor_n^{R_{\fp}}(M_{\fp},k(\fp))=0$ for all $\fp\in \Spec R$ then the flat dimension of $M$ is finite.
\end{remark}

In what follows, given an endomorphism $\phi\colon R\to R$ and an $R$-complex $M$, we write ${}^{\phi}\!M$ for $M$ viewed as an $R$-complex via $\phi$. 

\begin{proof}[Proof of Theorem \ref{th:main}]
Recall that $R$ is a Noetherian ring of prime characteristic $p$ and $f\colon R\to R$ is the Frobenius endomorphism.  

(1)$\Rightarrow$(2): Fix an integer $e\ge 1$ and set $r:=\sup \Tor_{*}^R(M, {}^{f^{e}}\!R)$. Since $\flatdim_R M$ is finite,  $r<\infty$ holds. The desired result is that $r\le s$.

Pick a prime ideal $\fp$ associated to $\Tor_{r}^R(M, {}^{f^{e}}\!R)$. Since Frobenius commutes with localization one has
\[
\Tor_{r}^R(M, {}^{f^{e}}\!R)_{\fp} \cong \Tor_{r}^{R_{\fp}}(M_{\fp}, {}^{f^{e}}\!R_{\fp})
\]
as $R_{\fp}$-modules. Moreover $\flatdim_{R_{\fp}}M_{\fp}$ is finite. Thus replacing $R$ and $M$ by their localizations at $\fp$ we get that the maximal ideal of $R$ is associated to $\Tor_{r}^R(M, {}^{f^{e}}\!R)$; that is to say, the depth of the latter module is zero.

The next step uses some results concerning depth for complexes; see~\cite{FI}. Given the conclusion of the last paragraph, \cite[2.7]{FI} yields the last equality below.
\begin{align*}
\depth R - \sup\Tor_{*}^{R}(k,M)
	&= \depth_{R}({}^{f^{e}}\!R) - \sup\Tor_{*}^{R}(k,M) \\
	&= \depth_{R}(M\lotimes_{R}{}^{f^{e}}\!R)\\
	&=-r
\end{align*}
The second one is by \cite[Theorem 2.4]{FI}. The same results also yield
\[
\depth R - \sup\Tor_{*}^{R}(k,M) = \depth_{R}M \ge - \sup\hh_{*}(M) =-s
\]
It follows that $-r\ge -s$, that is to say, $r\leq s$, which is the desired conclusion.

\medskip

(3)$\Rightarrow$(1):  Let $d=\dim R$. By Remark~\ref{re:CIM},  it suffices to verify that
\begin{equation}
\label{eq:proof}
\Tor^{R_{\fp}}_{t+d}(M_{\fp}, k(\fp))=0\quad\text{for all $\fp\in \Spec R$}.
\end{equation}
Fix $\fp\in \Spec R$ and choose $e$ such that $p^e\ge c(R_{\fp})$ and $\Tor^{R}_{i}(M,{}^{f^{e}}\!R)=0$ for $t\le i\le t+d$; such an $e$ exists by our hypothesis.  As the Frobenius map commutes with localization, one gets
\[
\Tor_i^{R_{\fp}}(M_{\fp}, {}^{f^{e}}\!(R_{\fp}))=0\quad \text{for $t\le i \le t+d$}.
\]
The choice of $e$ ensures that $f^{e}(\fp R_{\fp})\subseteq \fp^{c(R_{\fp})}R_{\fp}$. Thus, Lemma~\ref{le:prop1} applied to the Frobenius endomorphism $R_{\fp}\to R_{\fp}$ yields $\Tor_{t+d}^{R_{\fp}}(M_{\fp}, k(\fp))=0$, as desired. 

\medskip

Assume now that $R$ is Cohen-Macaulay and that the vanishing in (3) holds for some $e\ge \log_{p}e(R)$. It is a routine exercise to verify that the hypotheses remain unchanged, and that the desired conclusion can be verified, after passage to faithfully flat extensions. One can thus assume that the residue field $k$ is infinite; see \cite[IX.37]{Bo}. Then, by the choice of $e$ and Lemma~\ref{le:CM}, one gets that $p^{e}\ge c(R_{\fp})$ for each $\fp$ in $\Spec R$. Then one can argue as above to deduce that \eqref{eq:proof} holds,  and that yields the finiteness of the flat dimension of $M$.
\end{proof}


\end{document}